\numberwithin{equation}{section}
\numberwithin{figure}{section}
\theoremstyle{plain}
\newtheorem{thm}{\protect\theoremname}[section]
\newtheorem{prop}[thm]{\protect\propositionname}
\theoremstyle{plain}
\theoremstyle{definition}
\newtheorem{defn}[thm]{\protect\definitionname}
\theoremstyle{plain}
\newtheorem{lem}[thm]{\protect\lemmaname}
\newtheorem{cor}[thm]{\protect\corollaryname}
\theoremstyle{plain}
\newtheorem{rem}[thm]{\protect\remarkname}
\theoremstyle{plain}
\newtheorem{example}[thm]{\protect\examplename}
\theoremstyle{plain}
\providecommand{\definitionname}{Definition}
\providecommand{\lemmaname}{Lemma}
\providecommand{\theoremname}{Theorem}
\providecommand{\corollaryname}{Corollary}
\providecommand{\remarkname}{Remark}
\providecommand{\propositionname}{Proposition}
\providecommand{\examplename}{Example}
\DeclareMathOperator{\loc}{loc}
\DeclareMathOperator{\dist}{dist}
\DeclareMathOperator{\cp}{cap}
\begin{document}

\title[On differentiability of Sobolev functions]{On differentiability of Sobolev functions with respect to the Sobolev norm}

\author{Vladimir Gol'dshtein}
\address{Department of Mathematics, Ben-Gurion University of the Negev, P.O.Box 653, Beer Sheva, 8410501, Israel}
\email{vladimir@math.bgu.ac.il}

\author{Paz Hashash}
\address{Department of Mathematics, Ben-Gurion University of the Negev, P.O.Box 653, Beer Sheva, 8410501, Israel}
\email{pazhash@post.bgu.ac.il}

\author{Alexander Ukhlov*}\footnotetext{\textbf{Corresponding author:} ukhlov@math.bgu.ac.il}
\address{Department of Mathematics, Ben-Gurion University of the Negev, P.O.Box 653, Beer Sheva, 8410501, Israel}
\email{ukhlov@math.bgu.ac.il}


\begin{abstract}
We study connections between the $W^1_p$-differentiability and the $L_p$-differentiability of Sobolev functions. We prove that, $W^1_p$-differentiability implies the $L_p$-differentiability, but the opposite implication is not valid. The notion of approximate differentiability is discussed as well. In addition, we consider the $W^1_p$-differentiability of Sobolev functions $\cp_p$-almost everywhere.
\end{abstract}
\maketitle
\footnotetext{\textbf{Key words and phrases:} Sobolev spaces, Potential theory} 
\footnotetext{\textbf{2000Mathematics Subject Classification:} 46E35, 31B15.}

\section{Introduction}

Let $\Omega\subset\mathbb R^n$ be an open set. In the classical work \cite{C51} it was proved that functions $f:\Omega\to\mathbb R$ of the Sobolev space $W^{1}_{p}(\Omega)$, $p>n$, are differentiable almost everywhere in $\Omega$ with respect to the uniform norm: there exists a linear mapping $L:\mathbb R^n\to \mathbb R$ such that
$$
\lim\limits_{z\to x}\frac{\left|f(z)-f(x)-L(z-x)\right|}{|z-x|}=0
$$
for almost all $x\in\Omega$, see also works \cite{CZ62,Se61}. In the case $p=n$ the differentiability of monotone functions of the Sobolev space $W^1_n(\Omega)$ was obtained in \cite{V66}. This result was extended to the case of spaces $W^1_p(\Omega)$, $n-1<p<\infty$, in \cite{Man94}. 

The differentiability with respect to the $L_p$-norm was first investigated in \cite{CZ60,CZ61}. The book \cite{St70} is devoted, in particular, to a systematic study of the $L_p$-differentiability, the detailed bibliography can be found in \cite{St70}. 
In addition, in \cite{CZ61} the conception of the $L_p-$differentiability was considered and the following theorem was proved: Let $1\leq p<\infty$ and $f\in W^1_{p}(\mathbb{R}^n)$, then $f$ is $L_p-$differentiable at almost every $x\in\mathbb{R}^n$ with respect to Lebesgue measure. In the work \cite{AFP}, the notion of $L_1$-differentiability for functions of bounded variation was discussed.

In the frameworks of Sobolev space theory, in \cite{Re68,VGR79}, the differentiability of Sobolev functions with respect to the Sobolev norms was considered. In the work \cite{Re68} it was proved that for a function $f\in W^1_p(\Omega)$, the formal differential $Df(x)$, $x\in\Omega$, defined by the weak gradient $\nabla f(x)$, is the differential with respect to convergence in $W^1_p(\Omega)$ for almost every $x\in\Omega$ with respect to Lebesgue measure. 

The first part of the present article is devoted to connections between the $L_p$-differentiability and the $W^1_p$-differentiability of Sobolev functions. We prove that, $W^1_p$-differentiability implies the $L_p$-differentiability, but the opposite
implication is not valid. The notion of approximate differentiability is discussed as well.

The $L_p$-differentiability of Sobolev functions $\cp_p$-almost everywhere was considered in  \cite{BZ}. The second part of the present article is devoted to the $W^1_p$-differentiability of Sobolev functions $\cp_p$-almost everywhere, refining the results of \cite{BZ}. We prove that if $f\in W^1_{p}(\Omega)$, $1\leq p<\infty$, and there exists a set $\mathcal{N}\subset\Omega$ with $\cp_p(\mathcal{N})=0$, such that every $x\in \Omega\setminus \mathcal{N}$ is an $L_p$-point of the weak gradient of $f$, then $f$ is $W^1_p$-differentiable $\cp_p$-almost everywhere (up to a set of $p$-capacity zero) in $\Omega$.

As a consequence of the assertion above, we obtain a generalization of the theorem that states Sobolev functions in $W^2_p$ are $L_p$-differentiable $\cp_p$-almost everywhere, as referenced in Theorem 3.4.2 of \cite{Ziemerweaklydifferentiable}. More precisely, we have the following assertion:  If $f\in W^1_{p}(\Omega)$, $1\leq p<\infty$, and there exists a set $\mathcal{N}\subset\Omega$ with $\cp_p(\mathcal{N})=0$, such that every $x\in \Omega\setminus \mathcal{N}$ is an $L_p$-point of the weak gradient of $f$,  then, $f$ is $L_p$-differentiable $\cp_p$-almost everywhere in $\Omega$.

Remark that any function of the Sobolev space of the second order $W^2_p(\Omega)$ satisfies the condition of the above assertion, but the opposite is not true.

\section{Sobolev spaces and the differentiability in different topologies}

\subsection{Sobolev spaces and capacity}
Let $\Omega$ be an open subset of $\mathbb R^n$. The Sobolev space 
$W^m_p(\Omega)$, $m\in \mathbb N$, $1\leq p<\infty$,
is defined as the normed
space of functions $f\in L_p(\Omega)$ such that the partial derivatives of order less than or equal to $m$
exist in the weak sense and belong to $L_p(\Omega)$. The space is equipped with the norm
\begin{equation}
\label{eq:definition of Sobolev norm W^m_p}
\|f\|_{W^m_p(\Omega)}=
\sum\limits_{|\alpha|\leq m}\left(\int\limits_{\Omega} |D^{\alpha}f(x)|^p\,dx\right)^{\frac{1}{p}}<\infty,
\end{equation}  
$D^{\alpha} f$ is the weak derivative of order $\alpha$ of the function $f$, 
where $\alpha=(\alpha_1,...,\alpha_n)$ multiindex, $\alpha_i\in \mathbb{N}\cup\{0\},1\leq i\leq n$.

Sobolev spaces are Banach spaces of equivalence classes \cite{M}. To clarify the notion of equivalence classes of Sobolev functions we use the nonlinear $p$-capacity associated with Sobolev spaces \cite{GResh,HKM,M}. 
Suppose $\Omega$ is an open set in $\mathbb R^n$ and  $K\subset\Omega$ is a compact set. The $p$-capacity of $K$ with respect to $\Omega$ is defined by
\begin{equation*}
\cp_p(K;\Omega):=\inf\int_\Omega|\nabla u(x)|^p~dx,  
\end{equation*} 
where the infimum is taken over all functions $u\in C^\infty_c(\Omega)$, $u\geq 1$ on $K$, which are called {\it admissible functions} for the compact set $K\subset\Omega$. 
If 
$U \subset\Omega$ 
is an open set, we define
\begin{equation*}
\cp_p(U;\Omega):=\sup\cp_p(K;\Omega), \,\,K\subset U,\,\,\text{$K$ is compact}.
\end{equation*}
In the case of an arbitrary set 
$E\subset\Omega$
we define 
\begin{equation*}
\cp_{p}(E;\Omega):=\inf\cp_{p}(U;\Omega),\,\,E\subset U\subset\Omega,\,\,\text{$U$ is open}.
\end{equation*}
In case of $\Omega=\mathbb{R}^n$ we use the notation $\cp_{p}(E)=\cp_{p}(E;\mathbb{R}^n)$. It is well-known that if $\cp_p(E)=0$, then $|E|=0$ for every set $E\subset \mathbb{R}^n$ \cite{evans2015measure,K21}, where $|E|$ denotes the $n-$dimensional Lebesgue measure of the set $E$. 

Let $\Omega\subset\mathbb{R}^n$ be an open set and
$f\in L_{1,\loc}(\Omega).$
The {\it precise representative} of 
$f$ is defined by 
\begin{equation}
\label{eq:precise representation}
f^*:\Omega\to \mathbb{R},\quad f^*(x):=
\begin{cases}
\lim_{r\to 0^+}\fint_{B(x,r)}f(y)dy,\quad & \text{if the limit exists and belongs to $\mathbb{R}$};\\
0, \quad & \text{otherwise}.
\end{cases}
\end{equation}
The symbol $\fint$ in the definition above stands for the average of the function 
$f$:
\begin{equation*}
\fint_{B(x,r)}f(y)dy=\frac{1}{|B(x,r)|}\int_{B(x,r)}f(y)dy,
\end{equation*}
where $B(x,r)$ stands for the open ball around $x$ with radius $r$.

Recall that since almost every point in $\Omega$ is a Lebesgue point with respect to Lebesgue measure for functions $f\in  L_{1,\loc}(\Omega)$, then $f(x)=f^*(x)$ for almost every point $x\in \Omega$ with respect to Lebesgue measure. Note also, that if $f,g\in L_{1,\loc}(\Omega)$ and $f=g$ almost everywhere in $\Omega$, then $f^*(x)=g^*(x)$ for every $x\in\Omega$. If $f$ is a continuous function, then $f(x)=f^*(x)$ for every point $x\in \Omega$. If $f\in W^{1}_{p}(\Omega)$, then 
$\nabla f=\nabla f^*$ almost everywhere in $\Omega$.

The notion of $p$-capacity allows us to refine the concept of Sobolev functions. Let $f\in W^1_p(\Omega)$. Then, the precise representative $f^*$ defined by \eqref{eq:precise representation} is defined quasi-everywhere, i.e., up to a set of $p$-capacity zero \cite{HM72,M}. If $f\in W^1_p(\Omega)$, $f^*$ is called the {\it unique quasicontinuous representation} or the {\it canonical representation} of the function $f$. 

Let us recall the notion of $L_p$-points \cite{Re68}. Let $\Omega\subset\mathbb{R}^n$ be an open set, $1\leq p<\infty$ and $f\in L_{p,\loc}(\Omega)$. Then a point $x\in \Omega$ is called an $L_p-$point of $f$ if the limit $f^*(x):=\lim_{r\to 0^+}\fint_{B(x,r)}f(z)dz$ exists, $f^*(x)\in\mathbb{R}$ and
\begin{equation*}
\lim_{r\to 0^+}\fint_{B(x,r)}|f(z)-f^*(x)|^p~dz=0.
\end{equation*} 

Remark that by the Lebesgue differentiation theorem we get $f^*\in L_{p,\loc}(\Omega)$, whenever $f\in L_{p,\loc}(\Omega)$ for every $1\leq p<\infty$.



\subsection{The differentiability in different topologies}
Let $\Omega\subset\mathbb R^n$ be an open set, and let $f:\Omega\to\mathbb R$ be a function belonging to $L_{p,\loc}(\Omega)$ for $1\leq p<\infty$. 
The function $f$ is called {\it $L_p$-differentiable} at $x\in \Omega$ (see, for example \cite{St70}) if there exists a linear mapping $L:\mathbb R^n\to \mathbb R$ such that
\begin{equation}
\label{eq:expression for L_p-differentiability}
\lim_{r\to 0^+}\fint_{B(x,r)}\frac{|f(z)-f^*(x)-L(z-x)|^p}{r^p}dz=0.
\end{equation}
This linear mapping, uniquely defined by \eqref{eq:expression for L_p-differentiability}, is called the {\it $L_p$-differential} of the function $f$ at the point $x$, denoted by $D_pf(x)$.

Now we define the notion of approximate differentiability in accordance with \cite{F69}.
Let $\Omega\subset \mathbb{R}^n$ be an open set and let $f:\Omega\to\mathbb{R}$ be a measurable function.  
We say that $f$ is {\it approximately differentiable} at the point $x\in \Omega$ if there exist a number $z\in\mathbb{R}$ and a linear mapping $L:\mathbb{R}^n\to \mathbb{R}$ such that for every 
$\varepsilon>0$ the set 
\begin{equation}
\label{apdif}
A_\varepsilon=\{y\in \Omega\setminus \{x\} : D_x(y)>\varepsilon\},\quad \text{where}\,\,D_x(y):=\frac{|f(y)-z-L(y-x)|}{|y-x|},
\end{equation}
has density zero at the point $x$ with respect to the Lebesgue measure.

If $f$ is approximately differentiable at $x$, then $z$ and $L$ are uniquely determined. The point $z$ is called the {\it approximate limit} of $f$ at $x$ and $L$ is called the {\it approximate differential} of $f$ at $x$ and is denoted as $D_{ap}f(x)$.

The notion of $W^1_p$-differentiability was introduced in \cite{Re68}. Let $1\leq p<\infty$, $\Omega\subset \mathbb R^n$ be an open set, 
$f\in W^1_{p,\loc}(\Omega)$ and $x\in \Omega$ an $L_p-$point of $f$. We say that $f$ is {\it $W^1_p$-differentiable} at $x$ if there exists a linear mapping $L:\mathbb R^n\to \mathbb R$ such that for every open and bounded set $U\subset\mathbb R^n$
\begin{equation}
\lim_{h\to 0}\|f_{x,h}-L\|_{W^1_p(U)}=0,
\end{equation}
where $f_{x,h}$ is defined by 
\begin{equation}
\label{function}
f_{x,h}(z):=\frac{f^*(x+h z)-f^*(x)}{h}, \quad h\in \mathbb{R}\setminus\{0\},z\in \frac{\Omega-x}{h}.
\end{equation}
We call $L$ the {\it formal differential} of $f$ at $x$ and denote in by $L=Df(x)$.

Remark that for each $x\in\Omega$, the family of functions $\{f_{x,h}\}_{h\in \mathbb{R}\setminus\{0\}}$ is well-defined on any non-empty bounded set of $\mathbb R^n$ for every $h$ such that the value $|h|>0$ is sufficiently small:
Since $\Omega$ is open and $x\in\Omega$, there exists $r>0$ such that $B(x,r)\subset \Omega$. If $B\subset \mathbb{R}^n$ is an arbitrary non-empty bounded set, such that $B\neq \{0\}$, then for every $h$ such that $|h|<r/R$, where $R:=\sup_{z\in B}|z|$, we get $x+hB\subset B(x,r)$. Thus, the function $f_{x,h}$ is defined on $B$ for every $0<|h|<r/R$.

\section{Comparison for the differentiability in different topologies}

In this section we prove that, $W^1_p$-differentiability in $L_p$-points implies the $L_p$-differentiability in $L_p$-points, but the opposite
implication is not valid. 

The first assertion concerns connections between $L_p$-differentiability and approximate differentiability.

\begin{thm}
\label{thm:Lp ap diff implies ap diff}
Let $\Omega\subset\mathbb{R}^n$ be an open set, $1\leq p<\infty$ and $f\in L_{p,\loc}(\Omega)$. Suppose that $x\in\Omega$ is an $L_p$-point of $f$. Then:

\noindent
$(1)$ If $f$ is $L_p$-differentiable at $x$, then it is approximately differentiable at $x$.

\noindent
$(2)$ If $f$ is approximately differentiable at $x$, and there exists an open set $\Omega_0 \subset \Omega$ containing $x$ such that the function $y \mapsto D_x(y)$, as defined in $\eqref{apdif}$, is bounded within $\Omega_0$, then $f$ is $L_p$-differentiable at $x$.
\end{thm}

\begin{proof} $(1)$ Let $x\in \Omega$ be an $L_p$-point of $f$ and assume that $f$ is $L_p$-differentiable at $x$. Let us define for every $\varepsilon>0$
\begin{equation*}
A_\varepsilon=\{y\in \Omega\setminus \{x\} : D_x(y)>\varepsilon\},\quad D_x(y):=\frac{|f(y)-z-L(y-x)|}{|y-x|},
\end{equation*}
where $L$ is the $L_p$-differential of $f$ at $x$ and $z:=f^*(x)$. We prove that $A_\varepsilon$ has density zero at $x$ for every $\varepsilon>0$. Assuming the contrary, we suppose that there exists $\varepsilon>0$ such that the upper density of the set $A_\varepsilon$ at the point $x$ is positive, which means that 
\begin{equation*}
\limsup_{r\to 0^+}\frac{\left|A_\varepsilon\cap B(x,r)\right|}{\left|B(x,r)\right|}>0.
\end{equation*}
Therefore, there exists a positive number $\alpha>0$ and a sequence $r_i \to 0^+$ as $i\to \infty$  
such that 
\begin{equation}
\label{eq: formula1 for diff.quotient}
\frac{\left|A_\varepsilon\cap B(x,r_i)\right|}{\left|B(x,r_i)\right|}>\alpha,\quad \forall i\in \mathbb{N}.
\end{equation}
Note that for any $0<\sigma<1$   
\begin{equation}
\label{eq: formula2 for diff.quotient}
\left|A_\varepsilon\cap \left(B(x,r_i)\setminus B(x,\sigma r_i)\right)\right|=\left|A_\varepsilon\cap B(x,r_i)\right|-\left|A_\varepsilon\cap  B(x,\sigma r_i)\right|.
\end{equation}
Therefore, using \eqref{eq: formula1 for diff.quotient} and \eqref{eq: formula2 for diff.quotient}, we get
\begin{equation*}
\frac{\left|A_\varepsilon\cap \left(B(x,r_i)\setminus B(x,\sigma r_i)\right)\right|}{\left|B(x,r_i)\right|}>\alpha-\frac{\left|A_\varepsilon\cap  B(x,\sigma r_i)\right|}{\left|B(x,r_i)\right|}
,\quad \forall i\in \mathbb{N}.
\end{equation*}
Since 
\begin{equation*}
\frac{\left|A_\varepsilon\cap  B(x,\sigma r_i)\right|}{\left|B(x,r_i)\right|}\leq \sigma^n,\quad \forall i\in \mathbb{N},
\end{equation*}
we can take the number $\sigma$ such that $\sigma^n<\frac{\alpha}{2}$. Then
\begin{equation*}
\frac{\left|A_\varepsilon\cap \left(B(x,r_i)\setminus B(x,\sigma r_i)\right)\right|}{\left|B(x,r_i)\right|}>\frac{\alpha}{2},\quad \forall i\in \mathbb{N}.
\end{equation*}
Therefore, by the Chebyshev inequality (see, for example, \cite{evans2015measure}) we get for every  $i\in \mathbb{N}$
\begin{multline*}
\frac{\alpha}{2}<\frac{\left|A_\varepsilon\cap \left(B(x,r_i)\setminus B(x,\sigma r_i)\right)\right|}{\left|B(x,r_i)\right|}
\\
\leq \frac{\left|\{y\in B(x,r_i):|f(y)-f^*(x)-L(y-x)|>\varepsilon \sigma r_i\}\right|}{\left|B(x,r_i)\right|}
\\
\leq \frac{1}{(\varepsilon \sigma)^p}\fint_{B(x,r_i)}\frac{|f(y)-f^*(x)-L(y-x)|^p}{r_i^p}dy.
\end{multline*}
The last inequality contradicts the assumption that $x$ is a point of $L_p$-differentiability of $f$. It proves that the set $A_\varepsilon$ has density zero at $x$. 

\vskip 0.1cm
\noindent
$(2)$ Let $x\in\Omega$ be an $L_p$-point of a function $f$. Assume that $f$ is approximately differentiable at $x$. Then, there exist a number $z\in\mathbb{R}$ and a linear mapping $L:\mathbb{R}^n\to \mathbb{R}$ such that for every 
$\varepsilon>0$ the set 
\begin{equation*}
A_\varepsilon=\{y\in \Omega\setminus \{x\} : D_x(y)>\varepsilon\},\quad \text{where}\,\,D_x(y):=\frac{|f(y)-z-L(y-x)|}{|y-x|},
\end{equation*}
has density zero at the point $x$ with respect to the Lebesgue measure.

Then for every $r>0$ such that $B(x,r)\subset \Omega_0$ we get
\begin{multline}
\label{eq:calculation for p-average}
\fint_{B(x,r)}\frac{|f(y)-z-L(y-x)|^p}{r^p}dy\leq \fint_{B(x,r)}\frac{|f(y)-z-L(y-x)|^p}{|y-x|^p}dy
\\
=\frac{1}{\left|B(x,r)\right|}\int_{B(x,r)\cap A_\varepsilon}\left(D_x(y)\right)^pdy
+\frac{1}{\left|B(x,r)\right|}\int_{B(x,r)\setminus A_\varepsilon}\left(D_x(y)\right)^pdy
\\
\leq M^p\frac{\left|A_\varepsilon\cap B(x,r)\right|}{\left|B(x,r)\right|}+\varepsilon^p,
\end{multline}
where the number $M$ is a bound on $D_x$ on the set $\Omega_0$. Since $x$ is a point of approximate differentiability and 
$\varepsilon>0$ is arbitrary, we obtain that $x$ is a point of $L_p$-differentiability of $f$. Note that by \eqref{eq:precise representation} and by \eqref{eq:calculation for p-average}, we get 
$$
z=\lim_{r\to 0^+}\fint_{B(x,r)}f(y)dy=f^*(x).
$$
Due to the uniqueness of $L_p$-differential, we get that $L$ is the $L_p$-differential of $f$ at $x$. 
\end{proof}

Recall the notion of the standard mollifier, see, for example, \cite{St70}. Let 
\begin{equation*}
\eta:\mathbb{R}^n\to \mathbb{R},\quad \eta(x):=
\begin{cases}
c_0\exp\left(\frac{1}{|x|^2-1}\right)\quad & |x|<1
\\
0\quad &|x|\geq 1
\end{cases},
\end{equation*}
where the constant $c_0$ is chosen for having $\|\eta\|_{L_1(\mathbb R^n)}=1$. For every 
$\varepsilon>0$ we define the function
\begin{equation*}
\eta_\varepsilon:\mathbb{R}^n\to \mathbb{R},\quad \eta_\varepsilon(x):=\frac{1}{\varepsilon^n}\eta\left(\frac{x}{\varepsilon}\right).
\end{equation*}
The family of functions $\eta_\varepsilon$ is called the {\it standard mollifier}.

Let $\Omega\subset\mathbb{R}^n$ be an open set. We denote 
$\Omega_\varepsilon:=\left\{x\in \Omega: \dist(x,\partial \Omega)>\varepsilon\right\}$. It is known (see, for example, \cite{Ziemerweaklydifferentiable}) that for a function 
$f\in L_{1,\loc}(\Omega)$ the convolution 
\begin{equation}
f_\varepsilon(x):=f*\eta_\varepsilon(x)=\int_{\Omega}f(y)\eta_\varepsilon(x-y)dy,
\end{equation} 
is a smooth function in $\Omega_\varepsilon$ and $f_\varepsilon$ converges to $f$ almost everywhere in $\Omega$ as $\varepsilon\to 0^+$; if $f\in W^1_{p,\loc}(\Omega)$, $1\leq p<\infty$, then $f_\varepsilon$ converges to $f$ as $\varepsilon\to 0^+$ in the topology of $W^1_{p,\loc}(\Omega)$, which means that 
$$
\lim_{\varepsilon\to 0^+}\|f-f_\varepsilon\|_{W^1_p(U)}=0\,\,\text{for every open set}\,\, U\subset\subset\Omega,
$$
and $\nabla f_\varepsilon(x)=\left(\nabla f*\eta_\varepsilon\right)(x),x\in \Omega_\varepsilon$. 

Recall also (see, for example, \cite{Ziemerweaklydifferentiable}) that if $f\in L_{p}(\Omega)$, $1\leq p\leq \infty$, and $U\subset\Omega$ is an open set such that $\dist(U,\mathbb{R}^n\setminus\Omega)>0$, then for every $\varepsilon>0$ such that $U\subset \Omega_\varepsilon$ 
\begin{equation}
\label{eq:continuity of convolution by a function in Lebesgue spaces}
\|f*\eta_\varepsilon\|_{L_p(U)}\leq \|f\|_{L_p(\Omega)}.
\end{equation}

Let us formulate the following connection between the convolution and $L_p$-points. We give the proof for the convenience of the readers. 
\begin{prop}
\label{prop:every L_p point is a converging point of the mollified family}
Let $\Omega\subset\mathbb{R}^n$ be an open set, $1\leq p<\infty$ and $f\in L_{p,\loc}(\Omega)$. For every $L_p-$point $w\in \Omega$ of $f$ we have 
$\lim_{\varepsilon\to 0^+}f_\varepsilon(w)=f^*(w)$.
\end{prop}

\begin{proof}
By Jensen's inequality
\begin{multline*}
|f_\varepsilon(w)-f^*(w)|^p=\left|\int_{B(w,\varepsilon)}\left(f(z)-f^*(w)\right)\eta_\varepsilon(w-z)dz\right|^p
\\
\leq \left(\frac{1}{\varepsilon^n}\int_{B(w,\varepsilon)}\left|f(z)-f^*(w)\right|\eta\left(\frac{w-z}{\varepsilon}\right)dz\right)^p
\\
\leq \|\eta\|^p_{L_\infty(\mathbb{R}^n)}\omega_n^p\left(\fint_{B(w,\varepsilon)}\left|f(z)-f^*(w)\right|dz\right)^p\\
\leq \|\eta\|^p_{L_\infty(\mathbb{R}^n)}\omega_n^p\fint_{B(w,\varepsilon)}\left|f(z)-f^*(w)\right|^pdz,
\end{multline*}
where $\omega_n=|B(0,1)|$ is the volume of the unit ball $B(0,1)\subset\mathbb R^n$.
\end{proof}

In the next assertion we prove that the points of the $W^1_p$-differentiability of $f$ are $L_p$-points of its weak gradient $\nabla f$.

\begin{thm}
\label{thm:equivalence of Sobolev diff and L_p point of weak derivative}
Let $\Omega\subset\mathbb{R}^n$ be an open set, $1\leq p<\infty$ and $f\in W^1_{p,\loc}(\Omega)$. Suppose $x\in \Omega$ an $L_p-$point of $f$. Then, $f$ is $W^1_p$-differentiable at $x$ if and only if $x$ is an $L_p$-point of the weak derivative $\nabla f$. In this case $Df(x)=(\nabla f)^*(x)$.
\end{thm}

\begin{proof}
Let $x$ be an $L_p-$point of the weak gradient $\nabla f$. Therefore, for every  open and bounded set $U\subset\mathbb R^n$ it follows that
\begin{equation}
\label{eq:equivalence for L_p point}
\lim_{s\to 0}\frac{1}{s^n}\int_{x+sU}\left|\nabla f(z)-(\nabla f)^*(x)\right|^pdz=0. 
\end{equation}
During the proof we set $v:=(\nabla f)^*(x)$. Let $U\subset\mathbb R^n$ be any non-empty open and bounded set. By the formula \eqref{function}, we get for the convolution $f_\varepsilon$ that
\begin{equation*}
(f_\varepsilon)_{x,t}(z)
=\frac{f_\varepsilon(x+tz)-f_\varepsilon(x)}{t}, \quad t\in \mathbb{R}\setminus\{0\},z\in \frac{\Omega-x}{t}.
\end{equation*}
Note that since $f_\varepsilon$ is continuous, then $(f_\varepsilon)^*=f_\varepsilon$. 

Then by Jensen's inequality, Fubini's theorem and the change of variables formula we get for $t$ with small enough $|t|>0$:
\begin{multline}
\label{eq:inequality4}
\int_{U}|(f_\varepsilon)_{x,t}(z)-v(z)|^pdz
=\int_{U}\left|\frac{\int_0^1 \frac{d}{ds}f_\varepsilon(x+st z)ds}{t}-v(z)\right|^pdz
\\
=\int_{U}\left|\frac{\int_0^1 \nabla f_\varepsilon(x+st z)\cdot t zds}{t}-v(z)\right|^pdz
\leq \int_{U}\int_0^1\left|\nabla f_\varepsilon(x+st z)\cdot z-v(z)\right|^pdsdz
\\
\leq \sup_{z\in U}|z|^p\int_0^1\int_{U}\left|\nabla f_\varepsilon(x+st z)-v\right|^pdzds
\\
=\sup_{z\in U}|z|^p\int_0^1\frac{1}{(st)^n}\int_{x+st U}\left|\nabla f_\varepsilon(y)-v\right|^pdyds.
\end{multline}

Since $f_\varepsilon$ converges to $f$ almost everywhere, $f=f^*$ almost everywhere and $x$ is an $L_p-$point of $f$, then by Proposition \ref{prop:every L_p point is a converging point of the mollified family} for almost every $z\in U$
\begin{equation}
\label{eq:limit of blow-up family}
\lim_{\varepsilon\to 0^+}(f_\varepsilon)_{x,t}(z)=\lim_{\varepsilon\to 0^+}\frac{f_\varepsilon(x+t z)-f_\varepsilon(x)}{t}=f_{x,t}(z).
\end{equation}

By Fatou's lemma 
\begin{multline}
\label{eq:inequality7}
\int_{U}|f_{x,t}(z)-v(z)|^pdz=\int_{U}\lim_{\varepsilon\to 0^+}|(f_\varepsilon)_{x,t}(z)-v(z)|^pdz\\
\leq \liminf_{\varepsilon\to 0^+}\int_{U}|(f_\varepsilon)_{x,t}(z)-v(z)|^pdz.
\end{multline}

Let us denote for every $t$ with small enough $|t|>0$
\begin{equation*}
F_\varepsilon(s):=\frac{1}{(st)^n}\int_{x+stU}\left|\nabla f_\varepsilon(z)-v\right|^pdz,\quad s\in(0,1).
\end{equation*}
We prove that 
$$
\sup_{s\in(0,1)}\sup_{\varepsilon\in(0,\infty)}F_\varepsilon(s)<\infty
$$ 
for application of the dominated convergence theorem to the right-hand side of \eqref{eq:inequality4} after taking the limit as $\varepsilon\to 0^+$.

Let $U_0\subset\mathbb{R}^n$ be an open bounded set such that $\overline{U}\subset U_0$. By \eqref{eq:continuity of convolution by a function in Lebesgue spaces} we get for small enough $\varepsilon>0$
\begin{multline}
\label{eq:upper bound for L_p norm of weak derivation of mollified family}
\frac{1}{(st)^n}\int_{x+stU}\left|\nabla f_\varepsilon(z)-v\right|^pdz\\
\leq 2^{p-1}\frac{1}{(st)^n}\int_{x+stU}\left|\nabla f_\varepsilon(z)\right|^pdz
+2^{p-1}\left|v\right|^p|U|
\\
=2^{p-1}\frac{1}{(st)^n}\left\|\nabla f*\eta_\varepsilon\right\|^p_{L_p\left(x+stU\right)}
+2^{p-1}\left|v\right|^p|U|
\\
\leq 2^{p-1}\frac{1}{(st)^n}\left\|\nabla f\right\|^p_{L_p\left(x+stU_0\right)}
+2^{p-1}\left|v\right|^p|U|
\\
=2^{p-1}\frac{1}{(st)^n}\int_{x+stU_0}\left|\nabla f(z)\right|^pdz
+2^{p-1}\left|v\right|^p|U|
\\
\leq 2^{2p-2}\frac{1}{(st)^n}\int_{x+stU_0}\left|\nabla f(z)-v\right|^pdz
+(2^{2p-2}+2^{p-1})\left|v\right|^p|U_0|.
\end{multline}

The function 
$$
s\longmapsto \frac{1}{(st)^n}\int_{x+stU_0}\left|\nabla f(z)-v\right|^pdz
$$ 
is bounded on $(0,1)$  because, by $\eqref{eq:equivalence for L_p point}$, there exists $\delta>0$ such that 
\begin{equation*}
\left|\frac{1}{\rho^n}\int_{x+\rho U_0}\left|\nabla f(z)-v\right|^pdz\right|\leq 1,\quad \forall \rho\in (-\delta,\delta).
\end{equation*}
Hence, for every $-\delta<t<\delta$ and $s\in (0,1)$ we obtain 
\begin{equation}
\label{eq:controlling average for using D.C.T (2)}
\left|\frac{1}{(st)^n}\int_{x+st U_0}\left|\nabla f(z)-v\right|^pdz\right|\leq 1. 
\end{equation}

By \eqref{eq:upper bound for L_p norm of weak derivation of mollified family}, \eqref{eq:controlling average for using D.C.T (2)}, the dominated convergence theorem, and the convergence of $f_\varepsilon$ to $f$ in the topology of 
$W^1_{p,\loc}(\Omega)$, we obtain 
\begin{multline}
\label{eq:equality8}
\lim_{\varepsilon\to 0^+}\int_0^1\frac{1}{(st)^n}\int_{x+st U}\left|\nabla f_\varepsilon(y)-v\right|^pdyds
\\
=\int_0^1\frac{1}{(st)^n}\lim_{\varepsilon\to 0^+}\int_{x+st U}\left|\nabla f_\varepsilon(y)-v\right|^pdyds
\\
=\int_0^1\frac{1}{(st)^n}\int_{x+st U}\left|\nabla f(y)-v\right|^pdyds.
\end{multline}
Thus, taking the lower limit as 
$\varepsilon\to 0^+$ in 
$\eqref{eq:inequality4}$ and using $\eqref{eq:inequality7}$ and 
$\eqref{eq:equality8}$, we get
\begin{equation}
\label{eq:ineqality8}
\int_{U}|f_{x,t}(z)-v(z)|^pdz\leq \sup_{z\in U}|z|^p\int_0^1\frac{1}{(st)^n}\int_{x+st U}\left|\nabla f(y)-v\right|^pdyds.
\end{equation}
Therefore, by the dominated convergence theorem, 
$\eqref{eq:equivalence for L_p point}$ and $\eqref{eq:ineqality8}$ we obtain
\begin{align}
\label{eq:first limit for Sobolev diff}
\lim_{t\to 0}\int_{U}|f_{x,t}(z)-v(z)|^pdz=0.
\end{align}
Next, notice that for $t$ with small enough $|t|>0$ and almost all  $z\in U$
\begin{align}
\label{eq:weak derivative of the blow-up family}
\nabla \left[f_{x,t}-v\right](z)=\nabla f(x+t z)-v.
\end{align}
Hence, by equation \eqref{eq:weak derivative of the blow-up family} and the change of variables formula we obtain
\begin{equation*}
\int_{U}\left|\nabla\left[f_{x,t}-v\right](z)\right|^pdz=\int_{U}\left|\nabla f(x+t z)-v\right|^pdz
=\frac{1}{t^n}\int_{x+t U}\left|\nabla f(y)-v\right|^pdy.
\end{equation*}
Therefore, we get by 
$\eqref{eq:equivalence for L_p point}$
\begin{align}
\label{eq:second limit for Sobolev diff}
\lim_{t\to 0}\int_{U}\left|\nabla\left[f_{x,t}-v\right](z)\right|^pdz=\lim_{t\to 0}\frac{1}{t^n}\int_{x+t U}\left|\nabla f(y)-v\right|^pdy=0.
\end{align}
By \eqref{eq:first limit for Sobolev diff} and \eqref{eq:second limit for Sobolev diff} we get that $f$ is $W^1_p$-differentiable at $x$, and $Df(x)=v$. 

\vskip 0.2cm

Next, suppose that a function $f$ is $W^1_p$-differentiable at $x$. Then, for every open and bounded set $U\subset\mathbb{R}^n$ we get
\begin{align}
\label{eq:for Sobolev diff}
0=\lim_{t\to 0}\int_{U}\left|\nabla\left[f_{x,t}-Df(x)\right](z)\right|^pdz=\lim_{t\to 0}\frac{1}{t^n}\int_{x+t U}\left|\nabla f(y)-Df(x)\right|^pdy.
\end{align}
Multiplying both sides of \eqref{eq:for Sobolev diff} by $1/|B(0,1)|$ and choosing $U=B(0,1)$, we obtain
\begin{equation}
\label{eq:the formal differential is the precise representative}
\lim_{t\to 0^+}\fint_{B(x,t)}\left|\nabla f(y)-Df(x)\right|^pdy=0.
\end{equation}
Thus, by \eqref{eq:the formal differential is the precise representative} and \eqref{eq:precise representation}, we get
\begin{equation*}
(\nabla f)^*(x)=\lim_{t\to 0^+}\fint_{B(x,t)}\nabla f(y)dy=Df(x).
\end{equation*}
Thus, $x$ is an $L_p$-point of $\nabla f$ and $(\nabla f)^*(x)=Df(x)$.
\end{proof}

In the following theorem we prove that, at $L_p$-points, $W^1_p$-differentiability implies $L_p$-differentiability.

\begin{thm}
\label{thm:differentiability in Sobolev topology gives L^p differentiability at}
Let $\Omega\subset\mathbb{R}^n$ be an open set, $1\leq p<\infty$ and $f\in W^1_{p,\loc}(\Omega)$. Let $x\in \Omega$ be an $L_p-point$ of $f$.
If $f$ is $W^1_p$-differentiable at $x$,  then it is $L_p$-differentiable at $x$ and  $D_pf(x)=Df(x)$. In particular, $f$ is approximately differentiable at $x$.
\end{thm}

\begin{proof}
Let $x\in \Omega$ be an $L_p-point$ of $f$. Assume $f$ is $W^1_p$-differentiable at $x$.
It follows for every small enough $r>0$
\begin{multline}
\label{eq:equation2}
\frac{1}{r^n}\int_{B(x,r)}\frac{|f(y)-f^*(x)-Df(x)(y-x)|^p}{r^p}dy
\\
=\int_{B(0,1)}\frac{|f(x+rz)-f^*(x)-Df(x)(rz)|^p}{r^p}dz
\\
=\int_{B(0,1)}\left|\frac{f(x+rz)-f^*(x)}{r}-Df(x)(z)\right|^pdz.
\end{multline}
Since $f$ is $W^1_p$-differentiable at $x$, then we get by \eqref{eq:equation2}
\begin{equation}
\lim_{r\to 0^+}\frac{1}{r^n}\int_{B(x,r)}\frac{|f(y)-f^*(x)-Df(x)(y-x)|^p}{r^p}dy=0,
\end{equation}
which means that $f$ is $L_p$-differentiable at $x$ and, by uniqueness of $L_p$-differential, $D_pf(x)=Df(x)$. By Theorem \ref{thm:Lp ap diff implies ap diff} we get that $f$ is approximately differentiable at $x$.
\end{proof}

As a consequence we have the following result on $L_p$-differentiability for Sobolev functions \cite{CZ61}.
\begin{cor}
Let $1\leq p<\infty$, $\Omega\subset \mathbb R^n$ be an open set, 
$f\in W^1_{p,\loc}(\Omega)$. Then, $f$ is $L_p$-differentiable almost everywhere in $\Omega$.
\end{cor}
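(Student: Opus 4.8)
The plan is to combine the two structural results already established in the paper, since the corollary is a direct specialization of them and requires no new argument. First I would observe that by Proposition~\ref{prop:second order Sobolev space is continuously embedded in F^{1,p}}, the space $W^2_{p,\loc}(\Omega)$ is a vector subspace of $RW^1_{p,\loc}(\Omega)$; hence any $f\in W^2_{p,\loc}(\Omega)$ automatically belongs to $RW^1_{p,\loc}(\Omega)$. Concretely, this inclusion holds because $\nabla f\in W^1_{p,\loc}(\Omega,\mathbb R^n)$ whenever $f\in W^2_{p,\loc}(\Omega)$, so Theorem~\ref{thm:fine property of Sobolev functions} guarantees that $\cp_p$-a.e. point of $\Omega$ is an $L_p$-point of each component of $\nabla f$, which is exactly the refined-gradient condition of Definition~\ref{def:refined property of gradient}.

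Having placed $f$ in $RW^1_{p,\loc}(\Omega)$, I would then invoke Theorem~\ref{thm:L^p differentiability for functions in F^{1,p}} directly: it asserts that every function in $RW^1_{p,\loc}(\Omega)$ is $L_p$-approximately differentiable at $\cp_p$-a.e. $x\in\Omega$, with $D_af(x)=(\nabla f)^*(x)$. Restricting this conclusion to our $f$ gives the claim immediately. Because the statement is a literal consequence of a theorem already proved, there is no genuine obstacle; the only bookkeeping point worth recording is that the two exceptional sets of $p$-capacity zero (the one arising from the inclusion $W^2_{p,\loc}\subset RW^1_{p,\loc}$ and the one from the differentiability theorem) are actually the same set produced by Theorem~\ref{thm:fine property of Sobolev functions} applied to $\nabla f$, so no subadditivity argument is even needed, and the exceptional set of the corollary is precisely that single set of $p$-capacity zero.
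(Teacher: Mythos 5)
Your proof is correct and is essentially identical to the paper's own one-line argument: apply Proposition~\ref{prop:second order Sobolev space is continuously embedded in F^{1,p}} to place $f$ in $RW^1_{p,\loc}(\Omega)$ and then invoke Theorem~\ref{thm:L^p differentiability for functions in F^{1,p}}. (Your closing remark that the two exceptional sets coincide is slightly imprecise, since the differentiability theorem's exceptional set also accounts for the $L_p$-points of $f$ itself and not only of $\nabla f$, but this does not affect the validity of the proof.)
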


\begin{proof}
Since $f\in W^1_{p,\loc}(\Omega)$, we get $\nabla f\in L_{p,\loc}(\Omega,\mathbb{R}^n)$. By the Lebesgue differentiation theorem, almost every point in $\Omega$ is an $L_p$-point of $\nabla f$. By Theorem \ref{thm:equivalence of Sobolev diff and L_p point of weak derivative}, at each such point, $f$ is $W^1_p$-differentiable. In addition, by Theorem \ref{thm:differentiability in Sobolev topology gives L^p differentiability at}, it is also $L_p$-differentiable at such points.
\end{proof}

The opposite implication of Theorem \ref{thm:differentiability in Sobolev topology gives L^p differentiability at} is not true in general. This means that if $x$ is a point of $L_p$-differentiability, it is not necessarily a point of $W^1_p$-differentiability. Let us provide a counterexample. In the following assertion, we give a function that is differentiable (in the usual sense) at a point $x$, but the point $x$ is not an $L_p$ point of its derivative. Therefore, at such a point, $f$ is $L_p$-differentiable, and by Theorem \ref{thm:equivalence of Sobolev diff and L_p point of weak derivative}, it is not $W^1_p$-differentiable at such a point. 

\begin{prop}
Let 
\begin{equation*}
f:(-1,1)\to \mathbb{R},\quad f(x)=\begin{cases}
x^2\sin\left(\frac{1}{x}\right)\quad &x\in (-1,1)\setminus\{0\}\\
0 \quad& x=0
\end{cases}.
\end{equation*}  
Then, the function $f$  is $L_1$-differentiable at $0$, but $0$ is not a $W^1_1$-differentiability point of $f$.
\end{prop} 

\begin{proof}
The function $f$ is differentiable at every $x\in (-1,1)$ and 
\begin{equation}
f'(x)=\begin{cases}
2x\sin\left(\frac{1}{x}\right)-\cos\left(\frac{1}{x}\right)\quad &x\in (-1,1)\setminus\{0\}\\
0 \quad& x=0
\end{cases}. 
\end{equation}
Since $f$ is continuous at $0$, we have that $0$ is an $L_1$-point of $f$. Additionally, as $f'$ is bounded in $(-1,1)$, $f$ is Lipschitz continuous on $(-1,1)$. Therefore, $f\in W^1_1((-1,1))$. The function $f$ is differentiable at $0$, making it $L_1$-differentiable at $0$. However, $0$ is not an $L_1$-point of $f'$, as we shall prove below. Thus, by Theorem \ref{thm:equivalence of Sobolev diff and L_p point of weak derivative}, $0$ is not a $W^1_1$-differentiability point of $f$.

Let us prove that $0$ is not an $L_1$-point of $f'$: Note that by the Fundamental Theorem of Calculus, we get
\begin{multline*}
(f')^*(0)=\lim_{r\to 0^+}\frac{1}{2r}\intop_{-r}^r f'(y)dy
=\lim_{r\to 0^+}\frac{1}{2r}\left(f(r)-f(-r)\right)\\
=\lim_{r\to 0^+}\frac{1}{2r}\left(2r^2\sin\left(\frac{1}{r}\right)\right)=0.
\end{multline*}
It follows that
\begin{multline}
\label{eq:equality6}
\limsup_{r\to 0^+}\frac{1}{2r}\intop_{-r}^r |f'(y)-(f')^*(0)|dy\\=\limsup_{r\to 0^+}\frac{1}{2r}\intop_{-r}^r |f'(y)|dy
=\limsup_{r\to 0^+}\frac{1}{2r}\intop_{-r}^r \left|2y\sin\left(\frac{1}{y}\right)-\cos\left(\frac{1}{y}\right)\right|dy
\\
\geq \limsup_{r\to 0^+}\left(\frac{1}{2r}\intop_{-r}^r \left|\cos\left(\frac{1}{y}\right)\right|dy+\frac{1}{2r}\intop_{-r}^r -\left|2y\sin\left(\frac{1}{y}\right)\right|dy\right)
\\
=\limsup_{r\to 0^+}\frac{1}{2r}\intop_{-r}^r \left|\cos\left(\frac{1}{y}\right)\right|dy+ \lim_{r\to 0^+}\frac{1}{2r}\intop_{-r}^r -\left|2y\sin\left(\frac{1}{y}\right)\right|dy,
\end{multline}
whenever the last limit exists\footnote{Recall that if $\{b_n\}_{n\in\mathbb{N}}$ is a converging sequence of real numbers and $\{a_n\}_{n\in\mathbb{N}}$ is an arbitrary sequence of real numbers, then $\limsup_{n\to\infty}(a_n+b_n)=\limsup_{n\to\infty}a_n+\lim_{n\to\infty}b_n$.}.
Notice that
\begin{equation*}
\frac{1}{2r}\intop_{-r}^r \left|2y\sin\left(\frac{1}{y}\right)\right|dy\leq \frac{1}{r}\intop_{-r}^r \left|y\right|dy=\frac{2}{r}\intop_{0}^r ydy=r,
\end{equation*}
so 
\begin{equation}
\label{eq:equality7}
\lim_{r\to 0^+}\frac{1}{2r}\intop_{-r}^r \left|2y\sin\left(\frac{1}{y}\right)\right|dy=0.
\end{equation}

Let us show that 
\begin{equation*}
\limsup_{r\to 0^+}\frac{1}{2r}\intop_{-r}^r \left|\cos\left(\frac{1}{y}\right)\right|dy>0.
\end{equation*}
For every $r>0$, since the function $\cos$ is an even function, we have by change of variables formula
\begin{align}
\label{eq:equality5}
\frac{1}{2r}\intop_{-r}^r \left|\cos\left(\frac{1}{y}\right)\right|dy=\frac{1}{r}\intop_{0}^r \left|\cos\left(\frac{1}{y}\right)\right|dy.
\end{align}
Denote $r_k:=\frac{1}{2\pi k}$. Note that $\left|\cos\left(\frac{1}{y}\right)\right|\geq \frac{\sqrt{2}}{2}$ for every $y\in \left[\frac{1}{2\pi k+\frac{\pi}{4}},\frac{1}{2\pi k}\right]$ and for every $k\in \mathbb{N}$, and the intervals $\left[\frac{1}{2\pi k+\frac{\pi}{4}},\frac{1}{2\pi k}\right],k\in \mathbb{N}$, are pairwise disjoint.
It follows that

\begin{multline}
\label{eq:equation4}
\frac{1}{r_k}\intop_{0}^{r_k} \left|\cos\left(\frac{1}{y}\right)\right|dy\geq 2\pi k\sum_{j=k}^\infty\intop_{\frac{1}{2\pi j+\frac{\pi}{4}}}^{\frac{1}{2\pi j}} \left|\cos\left(\frac{1}{y}\right)\right|dy
\\
\geq \sqrt{2}\pi k\sum_{j=k}^\infty\left(\frac{1}{2\pi j}-\frac{1}{2\pi j+\frac{\pi}{4}}\right)
=\frac{\sqrt{2}}{2}k\sum_{j=k}^\infty\left(\frac{1}{j}-\frac{1}{j+\frac{1}{8}}\right)
\\
=\frac{\sqrt{2}}{2}k\sum_{j=k}^\infty\left(\frac{1/8}{j(j+\frac{1}{8})}\right)\geq \frac{\sqrt{2}}{2}k\sum_{j=k}^\infty\left(\frac{1/8}{j(j+j)}\right)=\frac{\sqrt{2}}{32}k\sum_{j=k}^\infty\frac{1}{j^2}.
\end{multline}
Let us prove here a technical lemma:
\begin{lem}
\label{lem:estimates for the k'th sum of 1/j^2}
For every $k\in \mathbb{N}$ it follows that
\begin{equation}
\label{eq:equation3}
\frac{3}{4k}\leq \sum_{j=k}^\infty\frac{1}{j^2}\leq \frac{2}{k}.
\end{equation}
\end{lem}

\begin{proof}
Since $\frac{1}{j^2}=\frac{1}{j^2-\frac{1}{4}}\left(\frac{j^2-\frac{1}{4}}{j^2}\right)$, and $\frac{3}{4}\leq \frac{j^2-\frac{1}{4}}{j^2}\leq 1$, $j\in \mathbb{N}$, then
\begin{align}
\label{eq:equation1}
\frac{3}{4}\sum_{j=k}^\infty\frac{1}{j^2-\frac{1}{4}}\leq \sum_{j=k}^\infty\frac{1}{j^2}\leq \sum_{j=k}^\infty\frac{1}{j^2-\frac{1}{4}}.
\end{align}

It follows that 
\begin{multline}
\label{eq:equation40}
\sum_{j=k}^\infty\frac{1}{j^2-\frac{1}{4}}=\sum_{j=k}^\infty\left(\frac{1}{j-\frac{1}{2}}-\frac{1}{j+\frac{1}{2}}\right)
\\
=\sum_{j=k}^\infty\left(\frac{1}{j-\frac{1}{2}}-\frac{1}{(j+1)-\frac{1}{2}}\right)=\frac{1}{k-\frac{1}{2}}.
\end{multline}
In the last equality we used telescoping property of sums.
Since $\frac{1}{k}\leq \frac{1}{k-\frac{1}{2}}\leq \frac{2}{k}$, we get \eqref{eq:equation3} by combining \eqref{eq:equation1},\eqref{eq:equation40}.
\end{proof}

Hence, we conclude by Lemma 
$\ref{lem:estimates for the k'th sum of 1/j^2}$ and \eqref{eq:equality6},\eqref{eq:equality7},\eqref{eq:equality5},\eqref{eq:equation4}
\begin{equation*}
\limsup_{r\to 0^+}\frac{1}{2r}\intop_{-r}^r |f'(y)-(f')^*(0)|dy\geq \frac{\sqrt{2}}{32}\frac{3}{4}>0.
\end{equation*}
 
Therefore, $0$ is not an $L_1-$point of $f'$. Thus, by Theorem \ref{thm:equivalence of Sobolev diff and L_p point of weak derivative}, the point $0$ is not a $W^1_1$-differentiability point of $f$.
\end{proof}

\begin{rem}
Notice that the last example demonstrates that differentiability at the point $x\in\Omega$ (in the usual sense) does not necessarily imply $W^1_p$-differentiability at this point $x\in\Omega$. However, continuous differentiability does imply $W^1_p$-differentiabi\-li\-ty.
\end{rem}



\section{Sobolev functions with refined weak gradients}
In this section, we introduce the space $RW^1_p(\Omega)$ of Sobolev functions in $W^1_p(\Omega)$ with refined weak gradients, meaning that the weak gradients are $\cp_p$-refined, where $\cp_p$ is the $p$-capacity. We show that the space $RW^1_p(\Omega)$ lies strictly between the spaces $W^1_p(\Omega)$ and $W^2_p(\Omega)$:
$$
W^2_p(\Omega)\subsetneq RW^1_p(\Omega) \subsetneq W^1_p(\Omega).
$$

This leads to a capacity-based version of Reshetnyak's theorem \cite{Re68}, which asserts that Sobolev functions are $W^1_p$-differentiable almost everywhere with respect to Lebesgue measure. We prove that Sobolev functions with refined gradients are $W^1_p$-differentiable $\cp_p$-almost everywhere.

We also get a slight generalization to the theorem about $L_p$-differentiability $\cp_p$-almost everywhere for Sobolev functions within $W^2_p$, refer to Theorem 3.4.2 in \cite{Ziemerweaklydifferentiable}. We establish that this result holds for a broader class of functions, specifically those in $RW^1_p$.

We extend the notion of $W^1_p$-differentiability and introduce a notion of $W^k_p$-differentiability, $k\in\mathbb{N}$. We represent the space $RW^k_p$, where $k\in\mathbb{N}$, and prove that functions in $RW^k_p$ are $W^k_p-$differentiable $\cp_p$-almost everywhere.    

\subsection{The space $RW^1_p$}
Let $\Omega$ be an open subset of $\mathbb R^n$ and $1\leq p<\infty$. We write $f\in RW^1_p(\Omega)$ if $f\in W^1_p(\Omega)$ and the weak gradient $\nabla f$ is $\cp_p$-refined, meaning that for 
\begin{equation}
\label{eq:refined property of the gradient}
\lim_{r\to 0^+}\fint_{B(x,r)}|\nabla f(z)-(\nabla f)^*(x)|^pdz=0 \quad for\quad  \cp_p-\text{almost every}\quad  x\in \Omega.
\end{equation}

Recall the following fine property of Sobolev functions \cite{evans2015measure,K21}:
\begin{thm}
\label{thm:fine property of Sobolev functions}
Let $\Omega\subset\mathbb R^n$ be an open set and $1\leq p<\infty$. If $f\in W^{1}_{p}(\Omega)$, then there exists a Borel set 
$\mathcal{N}\subset \Omega$ such that  
\begin{equation}
\cp_p(\mathcal{N})=0 \quad \text{and} \quad  \lim_{r\to 0^+}\fint_{B(x,r)}|f(z)-f^*(x)|^pdz=0 \quad \forall x\in \Omega\setminus \mathcal{N}.
\end{equation} 
\end{thm}

\begin{rem}
\label{rem:W2p is a subset of RW1p}
Notice that functions of the space $W^2_p(\Omega)$ have $\cp_p$-refined weak gradients.
Indeed, let $f\in W^{2}_{p}(\Omega)$, then $\nabla f\in W^{1}_{p}(\Omega,\mathbb R^n)$, hence by Theorem $\ref{thm:fine property of Sobolev functions}$ it follows that $\cp_p-$almost every $x\in \Omega$ is an $L_p-$point of $\nabla f$, thus  $f\in RW^1_{p}(\Omega)$.  
\end{rem}

\begin{example}
We provide simple examples that demonstrate that the inclusions $W^{2}_{p}(\Omega) \subset RW^1_{p}(\Omega)$ and $RW^1_{p}(\Omega) \subset W^{1}_{p}(\Omega)$ can also be strict. 

\begin{enumerate}
\item We give an example for function $f\in RW^1_{p}(\Omega)\setminus W^{2}_{p}(\Omega)$. We choose $\Omega=B(0,1)\subset \mathbb R^n$, $n>1$, $p=1$ and let us look at the function $f:\mathbb R^n\to \mathbb R$ defined by the rule 
$f(x)=|x|$. Since $f$ is a Lipschitz function, then $f\in W^{1}_{1}(B(0,1))$. The weak gradient of $f$ is given by 
$\nabla f(x)=\frac{x}{|x|}$, 
which is not in $W^{1}_{1}(B(0,1),\mathbb R^n)$. Therefore, $f\notin W^{2}_{1}(B(0,1))$.

Since every point $x\neq 0$ is a continuous point of $\nabla f$, then it is a Lebesgue point, so
\begin{equation*}
(\nabla f)^*(x)=\lim_{r\to 0^+}\fint_{B(x,r)}\nabla f(z)dz=\frac{x}{|x|},\quad \forall x\in \mathbb R^n\setminus\{0\}.
\end{equation*}
Therefore 
\begin{equation*}
\lim_{r\to 0^+}\fint_{B(x,r)}\left|\frac{z}{|z|}-\frac{x}{|x|}\right|dz=0,\quad \forall x\in \mathbb R^n\setminus\{0\},\quad \cp_1(\{0\})=0.
\end{equation*}
Thus, $f\in RW^1_{1}(B(0,1))$.
We use the assumption $n>1$ to get 
$\cp_1(\{0\})=0$ from 
$\mathcal{H}^{n-1}(\{0\})=0$ using inequality $\cp_p(E)\leq C(n,p)\mathcal{H}^{n-p}(E)$, where $E\subset\mathbb{R}^n$, $C(n,p)$ is a constant dependent on $n,p$ only.

\item To construct a function $f\in W^1_{p}(\Omega)\setminus RW^1_{p}(\Omega)$
we choose $\Omega=B(0,1)\subset \mathbb R$, $p>1$ and the same function as above 
$f:\mathbb R\to\mathbb R, f(x)=|x|$. 
As above
\begin{equation*}
\lim_{r\to 0^+}\fint_{B(x,r)}\left|\frac{z}{|z|}-\frac{x}{|x|}\right|dz=0,\quad \forall x\in \mathbb R\setminus\{0\},
\end{equation*}
and 
\begin{equation*}
(\nabla f)^*(0)=\lim_{r\to 0^+}\fint_{B(0,r)}\frac{z}{|z|}dz=0,\quad \lim_{r\to 0^+}\fint_{B(0,r)}\left|\frac{z}{|z|}-0\right|dz=1\neq 0.
\end{equation*}
Since $p>1$ we have $\cp_p(\{0\})>0$, because the (outer) measure $\cp_p$ is an atomic measure in the case where the parameter $p$ is strictly bigger than the dimension $n$ (for proof see for example \cite{K21}). Thus $f\notin RW^1_{p}(B(0,1))$.

In fact, $f\in RW^1_{p}(\Omega)$ for $p>n$ if and only if 
$f\in W^{1}_{p}(\Omega)$ and \textbf{every} point $x\in \Omega$ is an $L_p-$point of $\nabla f$.
\end{enumerate}
\end{example}
By using standard methods one can get:
\begin{prop}
\label{algebra}
Let $\Omega\subset\mathbb{R}^n$ be an open set, $1\leq p<\infty$. The set $RW^1_p(\Omega)$ is a vector subspace of $ W^1_p(\Omega)$. Moreover, the space $RW^1_p(\Omega)\cap L_\infty(\Omega)$ is an algebra with respect to the pointwise product.
\end{prop}

\subsection{Fine differentiability of functions in $RW^1_p$}

Now we proceed to prove the capacitory version of Reshetnyak's theorem \cite{Re68}.

\begin{thm}
\label{thm:convergence of the difference quotient to its formal differential}
Let $1\leq p<\infty$, $\Omega\subset \mathbb R^n$ be an open set and let 
$f\in RW^1_{p}(\Omega)$.
Then $f$ is $W^1_p$-differentiable $\cp_p$-almost everywhere in $\Omega$. In particular, $f$ is $L_p$-differentiable $\cp_p$-almost everywhere in $\Omega$. 
\end{thm}

\begin{proof}
Since $f\in RW^1_{p}(\Omega)$, then there exists a set $E\subset \Omega$ such that $\cp_p(E)=0$ and for every $x\in \Omega\setminus E$ 
\begin{equation}
\lim_{r\to 0^+}\fint_{B(x,r)}|f(y)- f^*(x)|^pdy=0\quad \text{and}\quad \lim_{r\to 0^+}\fint_{B(x,r)}|\nabla f(y)-(\nabla f)^*(x)|^pdy=0.
\end{equation}
By Theorem \ref{thm:equivalence of Sobolev diff and L_p point of weak derivative} we get that $f$ is $W^1_p$-differentiable at every point $x\in\Omega\setminus E$. 
\end{proof}

By Remark \ref{rem:W2p is a subset of RW1p} and Theorem \ref{thm:convergence of the difference quotient to its formal differential} we get the following corollary:  
\begin{cor}
Let $\Omega\subset \mathbb R^n$ be an open set, $1\leq p<\infty$ and $f\in W^2_{p}(\Omega)$. Then, $f$ is $W^1_p$-differentiable $\cp_p$-almost everywhere in $\Omega$. In particular, $f$ is $L_p$-differentiable $\cp_p$-almost everywhere in $\Omega$.
\end{cor}

\subsection{The space $RW^k_p$}
We say that $\alpha\in \mathbb{R}^n$ is a multi-index if $\alpha=(\alpha_1,...,\alpha_n)$, where for every $1\leq i\leq n$, $\alpha_i\in \mathbb{N}\cup \{0\}$. Recall the operations 
$|\alpha|=\alpha_1+...+\alpha_n$, $\alpha!=\alpha_1!\cdot...\cdot\alpha_n!$ and for $z=(z_1,...,z_n)\in \mathbb{R}^n$, $z^\alpha=z_1^{\alpha_1}\cdot...\cdot z_n^{\alpha_n}$. 

\begin{defn}
Let $\Omega$ be an open subset of 
$\mathbb R^n$ and $1\leq p<\infty$ and $k\in\mathbb{N}$. We define the space $RW^{k}_{p}(\Omega)$ as a set of functions $f\in W^k_p(\Omega)$ which have $\cp_p$-refined weak derivatives of order $k$: for every multi-index $\alpha$ such that $|\alpha|=k$
\begin{equation*}
\label{eq:refined property of high order gradient}
\lim_{r\to 0^+}\fint_{B(x,r)}|D^{\alpha} f(z)-(D^{\alpha} f)^*(x)|^pdz=0 \quad for\quad  \cp_{p}-\text{almost every}\quad  x\in \Omega.
\end{equation*}
\end{defn}

\begin{rem}
The space $RW^{k}_{p}(\Omega)$ is a vector subspaces of $W^{k}_{p}(\Omega)$.
\end{rem}

\begin{rem}
\label{rem:L_p points for all derivatives for function RW}
Note that for a function $f\in RW^{k}_{p}(\Omega)$, we get by Theorem $\ref{thm:fine property of Sobolev functions}$ that almost every point with respect to $\cp_p$ is an $L_p-$point of $D^{\alpha} f$ for every multi-index $|\alpha|\leq k$.
\end{rem}

Recall Taylor formula with remainder of integral form for functions $f$ of the class $C^k$: If $\Omega\subset\mathbb{R}^n$ is an open set and 
$f\in C^k(\Omega)$ , then for every $x\in \Omega$ there exists $r>0$ such that $B(x,r)\subset\Omega$ and for every $y\in B(x,r)$ the following formula holds:
\begin{equation}
\label{eq:Taylor series for high derivatives of Sobolev functions}
f(y)=\sum_{|\alpha|\leq k-1}\frac{D^{\alpha} f(x)}{\alpha!}(y-x)^{\alpha}
+\sum_{|\alpha|=k}\frac{k}{\alpha!}(y-x)^\alpha\int_{0}^1(1-t)^{k-1}D^{\alpha} f(x+t(y-x))dt.
\end{equation}
Writing $y=x+hz$ for $|h|<r,z\in B(0,1)$, we get
\begin{equation}
\label{eq:formula for f(x+hz)}
f(x+hz)=\sum_{|\alpha|\leq k-1}\frac{D^{\alpha} f(x)}{\alpha!}(hz)^{\alpha}
+h^k\sum_{|\alpha|=k}\frac{k}{\alpha!}z^\alpha\int_{0}^1(1-t)^{k-1}D^{\alpha} f(x+thz)dt.
\end{equation}
The Taylor polynomial of order $k$ of $f$ around the point $x$ is given by
\begin{equation*}
\mathcal{P}^k_{f,x}:\mathbb{R}^n\to \mathbb{R},\quad \mathcal{P}^k_{f,x}(y):=\sum_{|\alpha|\leq k}\frac{D^{\alpha} f(x)}{\alpha!}(y-x)^{\alpha},
\end{equation*}
and substituting $y=x+hz$ we get
\begin{equation}
\label{eq:formula for P(x+hz)}
\mathcal{P}^k_{f,x}(x+hz)=\sum_{|\alpha|\leq k}\frac{D^{\alpha} f(x)}{\alpha!}(hz)^{\alpha}.
\end{equation}
The remainder of order $k$ of $f$ around $x$ is given by 
\begin{equation}
\label{eq:formula for R(x+hz)}
\mathcal{R}^k_{f,x}:\Omega\to \mathbb{R},\quad \mathcal{R}^k_{f,x}(y):=f(y)-\mathcal{P}^k_{f,x}(y).
\end{equation}
We get by \eqref{eq:formula for f(x+hz)}, \eqref{eq:formula for P(x+hz)} and \eqref{eq:formula for R(x+hz)}
\begin{multline}
\label{eq:description for the remainder at x+hz}
\mathcal{R}^k_{f,x}(x+hz)=
h^k\sum_{|\alpha|=k}\frac{k}{\alpha!}z^\alpha\int_{0}^1(1-t)^{k-1}D^{\alpha} f(x+thz)dt-\sum_{|\alpha|=k}\frac{D^{\alpha} f(x)}{\alpha!}(hz)^{\alpha}
\\
=h^k\sum_{|\alpha|=k}\frac{k}{\alpha!}z^\alpha\int_{0}^1(1-t)^{k-1}D^{\alpha} f(x+thz)dt-\sum_{|\alpha|=k}\frac{D^{\alpha} f(x)}{\alpha!}(hz)^{\alpha}\left(k\int_0^1(1-t)^{k-1}dt\right)
\\
=kh^k\sum_{|\alpha|=k}\frac{z^{\alpha}}{\alpha!}\int_0^1(1-t)^{k-1}\left(D^{\alpha} f(x+thz)-D^{\alpha} f(x)\right)dt,\quad |h|<r,z\in B(0,1).
\end{multline}

Now we give definitions of the Taylor polynomial and the remainder for Sobolev functions $f\in W^k_p(\Omega)$ in terms of the precise representative:
\begin{defn}
Let $\Omega\subset\mathbb{R}^n$ be an open set and $k\in \mathbb{N}$. Let 
$f\in W^k_{1}(\Omega)$, and let $x\in \Omega$ be an $L_1$-point of all the weak derivatives of $f$ up to order $k$. We define {\it Taylor polynomial of order $k$ of the function $f$ at the point $x$} to be the following function:
\begin{equation*}
\mathcal{P}^k_{f,x}:\mathbb{R}^n\to \mathbb{R},\quad \mathcal{P}^k_{f,x}(z):=\sum_{|\alpha|\leq k}\frac{(D^{\alpha} f)^*(x)}{\alpha!}(z-x)^{\alpha}.
\end{equation*}
We define the    
{\it remainder of order $k$ of the function $f$ at the point $x$} to be the following function:
\begin{equation*}
\mathcal{R}^k_{f,x}:\Omega\to \mathbb{R},\quad \mathcal{R}^k_{f,x}(z):=f^*(z)-\mathcal{P}^k_{f,x}(z).
\end{equation*}
We define the {\it remainder family} by
\begin{equation}
\label{eq:definition of Rx,h}
\{R^k_{f,x,h}\}_{h\in\mathbb{R}\setminus\{0\}},\quad R^k_{f,x,h}(z):=\mathcal{R}^k_{f,x}(x+hz), \quad \forall z\in \frac{\Omega-x}{h}.
\end{equation}
\end{defn}

\begin{rem}
The function $z\longmapsto R^k_{f,x,h}(z)$ is defined on $\frac{\Omega-x}{h}$ and, in particular, the family of functions 
$\{R^k_{f,x,h}\}_{h\in\mathbb{R}\setminus\{0\}}$ is defined on any bounded set $B\subset\mathbb{R}^n$ for every small enough $|h|$. 
\end{rem}

\begin{defn}
Let $\Omega\subset \mathbb R^n$ be an open set, $1\leq p<\infty$, $k\in \mathbb{N}$ and $f\in W^k_{p}(\Omega)$. Let $x\in \Omega$ be an $L_p$-point of all the weak derivatives, $D^{\alpha} f$, for every multi-index $|\alpha|\leq k$. We say that $f$ is $W^k_p$-differentiable at $x$ if for every open and bounded set 
$V\subset\mathbb R^n$ we get 
\begin{equation}
\lim_{h\to 0}\Big\|\frac{1}{h^k}R^k_{f,x,h}\Big\|_{W^k_p(V)}=0,
\end{equation}
where $R^k_{f,x,h}$ is the remainder family defined in \eqref{eq:definition of Rx,h}.
More explicitly,
\begin{equation}
\lim_{h\to 0}\left\|\frac{1}{h^k}\left[f(x+h(\cdot))-\sum_{|\alpha|\leq k}\frac{(D^{\alpha} f)^*(x)}{\alpha!}(h(\cdot))^{\alpha}\right]\right\|_{W^k_p(V)}=0,
\end{equation}
where in $(\cdot)$ we put the norm variable. 
\end{defn}

\begin{rem}
\label{re:equivalence for a high order Sobolev norm}
Recall that the Sobolev norm $\|f\|_{W^k_p(U)}$ is equivalent to the norm $\|f\|_{L_p(U)}+\sum_{|\alpha|=k}\|D^{\alpha} f\|_{L_p(U)}$ for every open and bounded set $U\subset\mathbb{R}^n$ with Lipschitz boundary. This equivalence means that there exist constants $c,C$ such that for every 
$f\in W^k_p(U)$
\begin{equation*}
c\|f\|_{W^k_p(U)}\leq \|f\|_{L_p(U)}+\sum_{|\alpha|=k}\|D^{\alpha} f\|_{L_p(U)}\leq C\|f\|_{W^k_p(U)}.
\end{equation*}
In particular, this equivalence holds for open balls. 
A proof of this equivalence can be found in \cite{GResh}.
\end{rem}

\begin{lem}
\label{lem:convergence of the remainder to zero}
Let $\Omega\subset \mathbb R^n$ be an open set, $1\leq p<\infty$, $k\in \mathbb{N}$ and $f\in W^k_{p}(\Omega)$. Suppose $x\in \Omega$ is a point such that for every multi-index 
$|\alpha|=k$ 
\begin{equation}
\label{eq:assumption1}
\lim_{r\to 0^+}\fint_{B(x,r)}|D^{\alpha} f(y)-(D^{\alpha} f)^*(x)|^pdy=0,
\end{equation}
and for every multi-index 
$|\alpha|\leq k-1$
\begin{equation}
\label{eq:assumption2}
\lim_{\varepsilon\to 0^+} D^\alpha f_\varepsilon(x)=(D^{\alpha} f)^*(x),\,\,f_\varepsilon=f*\eta_\varepsilon.
\end{equation}
Then, $f$ is $W^k_p$-differentiable at $x$.
\end{lem}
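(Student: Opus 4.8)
The plan is to follow the mollification scheme of Lemma~\ref{lem:fine property ia a point} and Theorem~\ref{thm:L^p differentiability for functions in F^{1,p}}, now applied to the integral form of Taylor's formula. First I would reduce to balls: given a bounded open $V\subset\mathbb R^n$, choose $\rho$ with $V\subset B:=B(0,\rho)$; since $\|g\|_{W^k_p(V)}\le\|g\|_{W^k_p(B)}$ and $B$ has Lipschitz boundary, Remark~\ref{re:equivalence for a high order Sobolev norm} lets me replace the $W^k_p$-norm of $g=\tfrac{1}{h^k}R^k_{f,x,h}$ by $\|g\|_{L_p(B)}+\sum_{|\beta|=k}\|\nabla^\beta g\|_{L_p(B)}$. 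Thus it suffices to prove that $\tfrac{1}{h^k}\|R^k_{f,x,h}\|_{L_p(B)}\to0$ and that $\tfrac{1}{h^k}\|\nabla^\beta R^k_{f,x,h}\|_{L_p(B)}\to0$ for every $|\beta|=k$.

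The top-order derivatives are immediate. Differentiating the definition of $R^k_{f,x,h}$ in $z$, for $|\beta|=k$ only the term $\alpha=\beta$ of the Taylor polynomial survives, and a direct computation gives, for a.e. $z$,
\[
\frac{1}{h^k}\nabla^\beta R^k_{f,x,h}(z)=\nabla^\beta f(x+hz)-(\nabla^\beta f)^*(x).
\]
Hence, by the change of variables $y=x+hz$,
\[
\frac{1}{h^{kp}}\int_B|\nabla^\beta R^k_{f,x,h}(z)|^p\,dz=\frac{1}{h^n}\int_{x+hB}|\nabla^\beta f(y)-(\nabla^\beta f)^*(x)|^p\,dy,
\]
which tends to $0$ as $h\to0^+$ by hypothesis~\eqref{eq:assumption1} in its rescaled form~\eqref{eq:equivalence for L_p point} (with $U=B$). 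This disposes of the derivative part using only~\eqref{eq:assumption1}.

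For the function part I would mollify. Writing $f_\varepsilon=f*\eta_\varepsilon$, the smooth Taylor identity~\eqref{eq:description for the remainder at x+hz} applies to $f_\varepsilon$, so that $\tfrac{1}{h^k}\mathcal{R}^k_{f_\varepsilon,x}(x+hz)$ equals $k\sum_{|\alpha|=k}\tfrac{z^\alpha}{\alpha!}\int_0^1(1-t)^{k-1}\big(\nabla^\alpha f_\varepsilon(x+thz)-\nabla^\alpha f_\varepsilon(x)\big)\,dt$. To compare with the target remainder (whose coefficients are the precise representatives $(\nabla^\alpha f)^*(x)$) I use that $\nabla^\alpha f_\varepsilon(x)\to(\nabla^\alpha f)^*(x)$ as $\varepsilon\to0^+$ for every $|\alpha|\le k$: for $|\alpha|\le k-1$ this is exactly assumption~\eqref{eq:assumption2}, and for $|\alpha|=k$ it follows from~\eqref{eq:assumption1} via Remark~\ref{rem:every L_p point is a converging point of the mollified family}. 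Since $f_\varepsilon\to f=f^*$ a.e., for fixed $h$ the function $\tfrac{1}{h^k}\big(f_\varepsilon(x+hz)-\mathcal{P}^k_{f,x}(x+hz)\big)$ converges a.e. in $z$ to $\tfrac{1}{h^k}R^k_{f,x,h}(z)$, so Fatou's lemma bounds $\|\tfrac{1}{h^k}R^k_{f,x,h}\|_{L_p(B)}^p$ by the $\liminf_{\varepsilon\to0^+}$ of the corresponding mollified integrals. After bounding $|z^\alpha|\le\rho^k$, applying Jensen's inequality in $t$ against the probability measure $k(1-t)^{k-1}\,dt$, changing variables $y=x+thz$, and passing to the limit $\varepsilon\to0^+$, I arrive at an estimate of the form
\[
\Big\|\frac{1}{h^k}R^k_{f,x,h}\Big\|_{L_p(B)}^p\le C\sum_{|\alpha|=k}\int_0^1(1-t)^{k-1}\Big(\frac{1}{(th)^n}\int_{x+thB}|\nabla^\alpha f(y)-(\nabla^\alpha f)^*(x)|^p\,dy\Big)\,dt.
\]
Finally I send $h\to0^+$: for each fixed $t\in(0,1)$ the inner average tends to $0$ by~\eqref{eq:assumption1}/\eqref{eq:equivalence for L_p point} (the scale $th\to0$), while the uniform bound from the $L_p$-point property—the analogue of~\eqref{eq:controlling average for using D.C.T (2)}, valid for all $t\in(0,1)$ once $h<\delta$—dominates the $t$-integrand by $C(1-t)^{k-1}\in L_1(0,1)$, so dominated convergence forces the right-hand side to $0$.

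The step I expect to be the genuine obstacle is the passage $\varepsilon\to0^+$ through the Taylor-parameter integral used to obtain the last display: the integrand is a rescaled mean of $|\nabla^\alpha f_\varepsilon-(\nabla^\alpha f)^*(x)|^p$ over the ball $x+thB$, whose radius collapses like $th$, so the interchange of the mollification limit with $\int_0^1\cdots\,dt$ is singular near $t=0$ and must be justified exactly as in Lemma~\ref{lem:fine property ia a point}, combining Fatou on the target side with dominated convergence on the bounding side, the domination again coming from the boundedness of the rescaled averages guaranteed by~\eqref{eq:assumption1}. The remaining ingredients—matching the lower-order Taylor coefficients through~\eqref{eq:assumption2} and the a.e.\ convergence $f_\varepsilon(x+hz)\to f^*(x+hz)$—are routine.
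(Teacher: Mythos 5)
Your proposal is correct and follows essentially the same route as the paper's proof: mollify, apply the integral-form Taylor identity to $f_\varepsilon$, match the coefficients via \eqref{eq:assumption2} and Remark~\ref{rem:every L_p point is a converging point of the mollified family}, use Fatou on the remainder side and dominated convergence on the bounding side as $\varepsilon\to0^+$, then dominated convergence as $h\to0^+$ from the boundedness of the rescaled averages, handle the top-order derivatives by direct change of variables, and finish with the norm equivalence on balls. The only (cosmetic) difference is that you apply Jensen against the probability measure $k(1-t)^{k-1}\,dt$ where the paper uses a discrete H\"older bound on the sum and then drops the factor $(1-t)^{(k-1)p}\le1$.
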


\begin{rem}
Note that, by Proposition \ref{prop:every L_p point is a converging point of the mollified family}, we can assume in Lemma \ref{lem:convergence of the remainder to zero} that $x$ is an $L_p$-point of the weak derivatives $D^{\alpha} f$ for every $|\alpha|\leq k$ to obtain equations \eqref{eq:assumption1} and \eqref{eq:assumption2}.
\end{rem}

\begin{proof}
Using $\eqref{eq:description for the remainder at x+hz}$ for the smooth function $f_\varepsilon$ we get: 
\begin{align*}
\frac{1}{h^k}R^k_{f_\varepsilon,x,h}(z)&=k\sum_{|\alpha|=k}\frac{z^{\alpha}}{\alpha!}\int_0^1(1-t)^{k-1}\left(D^{\alpha} f_\varepsilon(x+thz)-D^{\alpha} f_\varepsilon(x)\right)dt.
\end{align*}
Therefore,
\begin{multline}
\label{eq:inequality9}
\left|\frac{1}{h^k}R^k_{f_\varepsilon,x,h}(z)\right|^p
=\left|k\sum_{|\alpha|=k}\frac{z^{\alpha}}{\alpha!}\int_0^1(1-t)^{k-1}\left(D^{\alpha} f_\varepsilon(x+thz)-D^{\alpha} f_\varepsilon(x)\right)dt\right|^p
\\
\leq k^p|z|^{pk}\left(\sum_{|\alpha|=k}\frac{1}{\alpha!}\int_0^1(1-t)^{k-1}\left|D^{\alpha} f_\varepsilon(x+thz)-D^{\alpha} f_\varepsilon(x)\right|dt\right)^p
\\
\leq k^p|z|^{pk}C(k,p)\sum_{|\alpha|=k}\left(\frac{1}{\alpha!}\right)^p\int_0^1(1-t)^{(k-1)p}\left|D^{\alpha} f_\varepsilon(x+thz)-D^{\alpha} f_\varepsilon(x)\right|^pdt
\\
\leq k^p|z|^{pk}C(k,p)\sum_{|\alpha|=k}\left(\frac{1}{\alpha!}\right)^p\int_0^1\left|D^{\alpha} f_\varepsilon(x+thz)-D^{\alpha} f_\varepsilon(x)\right|^pdt,
\end{multline}
where $C(k,p)$ is a constant dependent on $k,p$ only.

Let $U\subset\mathbb{R}^n$ be an open ball. Then, by Fubini's theorem, the change of variables formula and inequality $\eqref{eq:inequality9}$ we get
\begin{multline}
\label{eq:inequality3}
\int_{U}\left|\frac{1}{h^k}R^k_{f_\varepsilon,x,h}(z)\right|^pdz
\\
\leq k^pC(k,p)\sup_{w\in U}|w|^{pk}\sum_{|\alpha|=k}\left(\frac{1}{\alpha!}\right)^p\int_{0}^1\left(\int_{U}\left|D^{\alpha} f_\varepsilon(x+thz)-D^{\alpha} f_\varepsilon(x)\right|^pdz\right)dt
\\
=k^pC(k,p)\sup_{w\in U}|w|^{pk}\sum_{|\alpha|=k}\left(\frac{1}{\alpha!}\right)^p\int_{0}^1\left(\frac{1}{(th)^n}\int_{x+thU}\left|D^{\alpha} f_\varepsilon(y)-D^{\alpha} f_\varepsilon(x)\right|^pdy\right)dt.
\end{multline}
Note that for almost every $z\in U$ we get
\begin{align}
\label{eq:equality9}
\lim_{\varepsilon\to 0^+}R^k_{f_\varepsilon,x,h}(z)&=\lim_{\varepsilon\to 0^+}\left(f_\varepsilon(x+hz)-\mathcal{P}^k_{f_\varepsilon,x}(x+hz)\right)
\\
&=f^*(x+hz)-\mathcal{P}^k_{f,x}(x+hz)=R^k_{f,x,h}(z).\nonumber
\end{align}
Indeed, since $f_\varepsilon$ converges to $f$ almost everywhere in $\Omega$ and $f=f^*$ almost everywhere, then 
$\lim_{\varepsilon\to 0^+}f_\varepsilon(x+hz)=f^*(x+hz)$ for almost every $z\in U$; by the assumption $\eqref{eq:assumption1}$, Proposition $\ref{prop:every L_p point is a converging point of the mollified family}$ and the identity $D^{\alpha} f_\varepsilon=(D^{\alpha} f)*\eta_\varepsilon=(D^{\alpha} f)_\varepsilon$, we have for every multi-index $|\alpha|=k$
\begin{equation}
\label{eq:limit of weak derivatives of mollified family}
\lim_{\varepsilon\to 0^+}D^{\alpha} f_\varepsilon(x)=(D^{\alpha} f)^*(x).
\end{equation}
Thus, taking into account the assumption 
$\eqref{eq:assumption2}$, we obtain for every $z\in \mathbb{R}^n$
\begin{multline}
\lim_{\varepsilon\to 0^+}\mathcal{P}^k_{f_\varepsilon,x}(x+hz)=
\lim_{\varepsilon\to 0^+}\sum_{|\alpha|\leq k}\frac{D^{\alpha} f_\varepsilon(x)}{\alpha!}(hz)^{\alpha}
\\
=\sum_{|\alpha|\leq k}\frac{(D^{\alpha} f)^*(x)}{\alpha!}(hz)^{\alpha}=\mathcal{P}^k_{f,x}(x+hz).
\end{multline} 
 
Thus, by $\eqref{eq:equality9}$ and Fatou's lemma 
\begin{equation}
\label{eq:inequality10}
\int_{U}\left|\frac{1}{h^k}R^k_{f,x,h}(z)\right|^pdz
\leq \liminf_{\varepsilon\to 0^+}\int_{U}\left|\frac{1}{h^k}R^k_{f_\varepsilon,x,h}(z)\right|^pdz.
\end{equation}

For every multi-index $\alpha$ such that $|\alpha|=k$ we get by the dominated convergence theorem, the convergence of $f_\varepsilon$ to $f$ in the topology of $W^k_{p,\loc}(\Omega)$\footnote{Which means that $\lim_{\varepsilon\to 0^+}\|f-f_\varepsilon\|_{W^k_p(U)}=0$ for every open set $U\subset\subset\Omega$.} and \eqref{eq:limit of weak derivatives of mollified family}
\begin{multline}
\label{eq:equality10}
\lim_{\varepsilon\to 0^+}\int_{0}^1\left(\frac{1}{(th)^n}\int_{x+thU}\left|D^{\alpha} f_\varepsilon(y)-D^{\alpha} f_\varepsilon(x)\right|^pdy\right)dt
\\
=\int_{0}^1\left(\frac{1}{(th)^n}\int_{x+thU}\left|D^{\alpha} f(y)-(D^{\alpha} f)^*(x)\right|^pdy\right)dt.
\end{multline}

Therefore, by taking the lower limit as 
$\varepsilon\to 0^+$ in the inequality 
$\eqref{eq:inequality3}$ and using 
$\eqref{eq:inequality10},\eqref{eq:equality10}$, we obtain 

\begin{multline}
\label{eq:inequality11}
\int_{U}\left|\frac{1}{h^k}R^k_{f,x,h}(z)\right|^pdz\leq
\\
k^pC(k,p)\sup_{w\in U}|w|^{pk}\sum_{|\alpha|=k}\left(\frac{1}{\alpha!}\right)^p\int_{0}^1\left(\frac{1}{(th)^n}\int_{x+thU}\left|D^{\alpha} f(y)-(D^{\alpha} f)^*(x)\right|^pdy\right)dt.
\end{multline}

By dominated convergence theorem, the assumption $\eqref{eq:assumption1}$, and $\eqref{eq:inequality11}$, we obtain
\begin{equation}
\label{eq:limit of Rx,h in L_p to zero}
\lim_{h\to 0}\int_{U}\left|\frac{1}{h^k}R^k_{f,x,h}(z)\right|^pdz=0.
\end{equation}

Next, let $\alpha$ be a multi-index such that $|\alpha|=k$. Then, for almost every 
$z\in U$
\begin{equation}
\label{eq: differentiation of remainder family}
D^{\alpha} \left(\frac{1}{h^k}R^k_{f,x,h}\right)(z)=D^{\alpha} f(x+hz)-(D^{\alpha} f)^*(x).
\end{equation}
Thus, by equation \eqref{eq: differentiation of remainder family} and the change of variables formula we get
\begin{multline}
\label{eq:part of Sobolev norm for the remainder family}
\int_{U}\left|D^{\alpha} \left(\frac{1}{h^k}R^k_{f,x,h}\right)(z)\right|^pdz=\int_{U}\left|D^{\alpha} f(x+hz)-(D^{\alpha} f)^*(x)\right|^pdz
\\
=\frac{1}{h^n}\int_{x+hU}\left|D^{\alpha} f(y)-(D^{\alpha} f)^*(x)\right|^pdy.
\end{multline}
Taking the limit as $h\to 0$ on both sides of the equation \eqref{eq:part of Sobolev norm for the remainder family} and using assumption $\eqref{eq:assumption1}$ we get
\begin{equation}
\label{eq:limit of the alpha order derivative of Rx,h in L_p to zero}
\lim_{h\to 0}\int_{U}\left|D^{\alpha} \left(\frac{1}{h^k}R^k_{f,x,h}\right)(z)\right|^pdz=0.
\end{equation}
Now, let $V\subset\mathbb{R}^n$ be any open and bounded set. Let $U$ be an open ball such that $V\subset U$. Using Remark $\ref{re:equivalence for a high order Sobolev norm}$, there exists a constant $C$ such that
\begin{multline}
\label{eq:using equivalence of norms}
\Big\|\frac{1}{h^k}R^k_{f,x,h}\Big\|_{W^k_p(V)}\leq \Big\|\frac{1}{h^k}R^k_{f,x,h}\Big\|_{W^k_p(U)}
\\
\leq C\left(\left\|\frac{1}{h^k}R^k_{f,x,h}\right\|_{L_p(U)}+\sum_{|\alpha|=k}\left\|D^{\alpha} \left(\frac{1}{h^k}R^k_{f,x,h}\right)\right\|_{L_p(U)}\right).
\end{multline}
Taking the limit as $h\to 0$ in inequality \eqref{eq:using equivalence of norms} and using $\eqref{eq:limit of Rx,h in L_p to zero},\eqref{eq:limit of the alpha order derivative of Rx,h in L_p to zero}$, we obtain
\begin{equation}
\lim_{h\to 0}\Big\|\frac{1}{h^k}R^k_{f,x,h}\Big\|_{W^k_p(V)}=0.
\end{equation}
\end{proof}

The following theorem is capacitory version of Reshetnyk's theorem \cite{Re68}:
\begin{thm}
\label{thm:convergence of the remainder to zero}
Let $\Omega\subset \mathbb R^n$ be an open set, $1\leq p<\infty$, $k\in \mathbb{N}$ and $f\in RW^k_{p}(\Omega)$. 
Then, $f$ is $W^k_p-$differentiable at $\cp_p-$almost every $x\in\Omega$.
\end{thm}

\begin{proof}
By the assumption that $f\in RW^k_{p}(\Omega)$ and Remark $\ref{rem:L_p points for all derivatives for function RW}$, there exists $E\subset \Omega$ such that $\cp_p(E)=0$ and for every $x\in \Omega\setminus E$ and multi-index 
$|\alpha|\leq k$ we get
\begin{equation*}
\lim_{r\to 0^+}\fint_{B(x,r)}|D^{\alpha} f(y)-(D^{\alpha} f)^*(x)|^pdy=0.
\end{equation*}
By Proposition $\ref{prop:every L_p point is a converging point of the mollified family}$ and the fact that 
$D^\alpha f_\varepsilon=(D^\alpha f)*\eta_\varepsilon=(D^\alpha f)_\varepsilon$
we also know that for every $x\in \Omega\setminus E$ and every multi-index 
$|\alpha|\leq k$ 
\begin{equation*}
\lim_{\varepsilon\to 0^+} D^\alpha f_\varepsilon(x)=(D^{\alpha} f)^*(x).
\end{equation*}
By Lemma \ref{lem:convergence of the remainder to zero}, each $x\in \Omega\setminus E$ is a point of $W^k_p-$differentiability of $f$. 
\end{proof}

\vskip 0.3cm

\end{document}